\newtheorem{theorem}{Theorem}
\newtheorem{lemma}[theorem]{Lemma}
\newtheorem{proposition}[theorem]{Proposition}
\newenvironment{proof}[1][Proof]{\noindent\textbf{#1.} }{\ \rule{0.5em}{0.5em}}
\begin{document}

\begin{center}
{\Large On Periodic and Chaotic Orbits in a Rational Planar System}

\medskip

\centerline{N. LAZARYAN and H. SEDAGHAT \footnote{Department of Mathematics, Virginia Commonwealth University Richmond, Virginia, 23284-2014, USA; Email: h.sedagha@vcu.edu}}
\end{center}

\medskip

\begin{abstract}
By folding an autonomous system of rational equations in the plane to a scalar
difference equation, we show that the rational system has coexisting periodic
orbits of all possible periods as well as stable aperiodic orbits for certain
parameter ranges.

\end{abstract}

\bigskip

The orbits of autonomous rational systems may exhibit behavior such as limit
cycles and chaos that are not seen in linear systems. It is difficult to prove
the existence of such solutions for nonlinear planar systems. In particular,
the existing literature seems to lack sufficient demonstrations of the
occurrence of complex behavior in rational systems. In this paper, we
investigate the semilinear rational system
\begin{subequations}
\label{1}%
\begin{align}
x_{n+1}  &  =ax_{n}+by_{n}+c\label{1a}\\
y_{n+1}  &  =\frac{a^{\prime}x_{n}+b^{\prime}y_{n}+c^{\prime}}{a^{\prime
\prime}x_{n}+b^{\prime\prime}y_{n}+c^{\prime\prime}} \label{1b}%
\end{align}

\noindent where all parameters are real numbers. To avoid reductions to linear
systems or to triangular systems, we assume that%

\end{subequations}
\begin{equation}
b\not =0,\quad|a^{\prime}|+|a^{\prime\prime}|,|a^{\prime\prime}|+|b^{\prime
\prime}|,|a^{\prime}|+|b^{\prime}|+|c^{\prime}|>0. \label{nt}%
\end{equation}

System (\ref{1}) in the case of non-negative parameters encompasses the
following 136 system types in \cite{CKLM} (and an equal number of mirror
systems obtained by switching x and y)
\begin{equation}
(7,l),\ (22,l),\ (25,l),\ (40,l)\quad\text{where }3\leq l\leq49,\ l\not =%
4,5,7,9,10,13,19,20,22,25,28,40\label{sty}%
\end{equation}

In this paper we study some of the global properties of (\ref{1}) through the
general process of folding. This concept appears, though not by this name, in
diverse areas from control theory to the study of chaos in differential
systems; see \cite{SF} for details and references and also for a general 
definition of folding as an algorithmic process. When folded to a
second-order rational difference equation, certain configurations of
parameters appear that are not readily apparent through other known methods.
We use these configurations to simplify the system and obtain sufficient
conditions for the occurrence of cycles and chaos.

\section{The main result}

Following \cite{SF}, we solve (\ref{1a}) for $y_{n}$ to obtain%
\begin{equation}
y_{n}=\frac{1}{b}(x_{n+1}-ax_{n}-c) \label{pyn}%
\end{equation}

Now%
\[
x_{n+2}=ax_{n+1}+by_{n+1}+c=c+ax_{n+1}+\frac{ba^{\prime}x_{n}+bb^{\prime}%
y_{n}+bc^{\prime}}{a^{\prime\prime}x_{n}+b^{\prime\prime}y_{n}+c^{\prime
\prime}}%
\]

Using (\ref{1b}) and (\ref{pyn}) to eliminate $y_{n}$ yields%
\[
x_{n+2}=c+ax_{n+1}+\frac{ba^{\prime}x_{n}+b^{\prime}(x_{n+1}-ax_{n}%
-c)+bc^{\prime}}{a^{\prime\prime}x_{n}+(b^{\prime\prime}/b)(x_{n+1}%
-ax_{n}-c)+c^{\prime\prime}}%
\]

Combining terms and simplifying we obtain the rational, second-order equation%
\begin{equation}
x_{n+2}=\frac{ab^{\prime\prime}x_{n+1}^{2}+aD_{ab}^{\prime\prime}x_{n+1}%
x_{n}+(aD_{cb}^{\prime\prime}+bb^{\prime}+cb^{\prime\prime})x_{n+1}%
+(bD_{ab}^{\prime}+cD_{ab}^{\prime\prime})x_{n}+bD_{cb}^{\prime}%
+cD_{cb}^{\prime\prime}}{b^{\prime\prime}x_{n+1}+D_{ab}^{\prime\prime}%
x_{n}+D_{cb}^{\prime\prime}} \label{xo2}%
\end{equation}
where%
\begin{equation}
D_{ab}^{\prime}=a^{\prime}b-ab^{\prime},\quad D_{ab}^{\prime\prime}%
=a^{\prime\prime}b-ab^{\prime\prime},\quad D_{cb}^{\prime}=bc^{\prime
}-b^{\prime}c,\quad D_{cb}^{\prime\prime}=bc^{\prime\prime}-b^{\prime\prime}c
\label{dets}%
\end{equation}

We refer to the pair of equations (\ref{pyn}) and (\ref{xo2}) as a
\textit{folding} of (\ref{1}). Note that (\ref{pyn}) is a \textit{passive}
equation in the sense that it yields $y_{n}$ without further iterations once a
solution $\{x_{n}\}$ of (\ref{xo2}) is known. In this sense, we may think of
(\ref{xo2}) as a reduction of (\ref{1}) to a scalar difference equation. If
$(x_{0},y_{0})$ is an initial point of an orbit of (\ref{1}) then the
corresponding solution of (\ref{xo2}) with initial values%
\begin{equation}
x_{0}\ \text{and }x_{1}=ax_{0}+by_{0}+c \label{iv}%
\end{equation}
yields the x-component of the orbit $\{(x_{n},y_{n})\}$ and the y-component is
given (passively) by (\ref{pyn}).

Equation (\ref{xo2}) is a rational equation of the type studied in
\cite{DKMOS}. The existence of solutions for (\ref{xo2}) is a nontrivial
issue. We consider the special case of (\ref{xo2}) where in addition to
conditions (\ref{nt}) the equalities $D_{ab}^{\prime},D_{ab}^{\prime\prime
},D_{cb}^{\prime\prime}=0$ hold, i.e.,%
\begin{equation}
a^{\prime}b=ab^{\prime},\ a^{\prime\prime}b=ab^{\prime\prime},\ b^{\prime
\prime}c=bc^{\prime\prime}. \label{d0}%
\end{equation}

In this case, (\ref{xo2}) reduces to the first-order recursion%
\[
x_{n+2}=\frac{ab^{\prime\prime}x_{n+1}^{2}+(bb^{\prime}+cb^{\prime\prime
})x_{n+1}+bD_{cb}^{\prime}}{b^{\prime\prime}x_{n+1}}%
\]
which by substituting $r_{n}=x_{n+1}$ may be written as%
\begin{align}
r_{n+1}  &  =ar_{n}+q+\frac{s}{r_{n}},\label{o1}\\
\text{where }q  &  =c+\frac{bb^{\prime}}{b^{\prime\prime}},\ s=\frac
{bD_{cb}^{\prime}}{b^{\prime\prime}},\ r_{0}=x_{1}\nonumber
\end{align}

A comprehensive study of Equation (\ref{o1}) appears in \cite{DKMOS}. In
particular, the following is proved which we state here as a lemma.

\begin{lemma}
Let $a=s=1$ and $q<0$ in (\ref{o1}).

(a) If $q>-2$ then all solutions of (\ref{o1}) with $r_{0}>0$ are
well-defined, positive and bounded.

(b) If $-\sqrt{5/2}<q<-\sqrt{2}$ then (\ref{o1}) has an asymptotically stable
2-cycle $\{t_{1},t_{2}\}$ where%
\[
t_{1}=\frac{-q-\sqrt{q^{2}-2}}{2},\quad t_{1}=\frac{-q+\sqrt{q^{2}-2}}{2}%
\]

(c) If $q=-\sqrt{3}$ then (\ref{o1}) has a stable solution of period 3%
\[
r_{0}=\frac{2}{\sqrt{3}}\left(  1+\cos\frac{\pi}{9}\right)  ,\quad r_{1}%
=r_{0}-\sqrt{3}+\frac{1}{r_{0}},\quad r_{2}=r_{1}-\sqrt{3}+\frac{1}{r_{1}}%
\]

(d) If $-2<q\leq-\sqrt{3}$ then solutions of (\ref{o1}) with $r_{0}>0$ include
cycles of all possible periods.

(e) For $-2<q<-\sqrt{3}$ orbits of (\ref{o1}) with $r_{0}>0$ are positive,
bounded and chaotic in the sense of Li and Yorke (see \cite{LY}).

\end{lemma}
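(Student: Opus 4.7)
With $a=s=1$ the recursion becomes $r_{n+1}=f(r_{n})$ for
\[
f(r)=r+q+\frac{1}{r},\qquad f'(r)=1-\frac{1}{r^{2}},
\]
and on $(0,\infty)$ this $f$ has a unique critical point at $r=1$ with minimum value $f(1)=2+q$, and a unique positive fixed point $r^{*}=-1/q$. Since $q>-2$ forces $f(r)\geq 2+q>0$ on $(0,\infty)$, every orbit starting at $r_{0}>0$ is well-defined and positive, proving the first half of (a). For boundedness, $f(r)-r=q+1/r<q/2<0$ whenever $r>-2/q$, so no orbit can escape to infinity and a compact absorbing interval $J\subset(0,\infty)$ exists.

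\textbf{Part (b).} For a $2$-cycle $\{t_{1},t_{2}\}$, adding $t_{2}=f(t_{1})$ and $t_{1}=f(t_{2})$ yields $(t_{1}+t_{2})/(t_{1}t_{2})=-2q$; subtracting the two, and using $t_{1}\neq t_{2}$, yields $t_{1}t_{2}=1/2$, whence $t_{1}+t_{2}=-q$. Thus $t_{1},t_{2}$ are the roots of $X^{2}+qX+1/2$, which are real, distinct, and positive exactly when $q<-\sqrt{2}$. Asymptotic stability amounts to $|(f\circ f)'(t_{1})|<1$, and a short computation using $t_{1}t_{2}=1/2$ and $t_{1}^{2}+t_{2}^{2}=q^{2}-1$ gives
\[
(f\circ f)'(t_{1})=f'(t_{1})f'(t_{2})=\Bigl(1-\tfrac{1}{t_{1}^{2}}\Bigr)\!\Bigl(1-\tfrac{1}{t_{2}^{2}}\Bigr)=9-4q^{2},
\]
so the $2$-cycle is asymptotically stable exactly on $\sqrt{2}<|q|<\sqrt{5/2}$.

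\textbf{Parts (c)--(e).} For (c) I would verify $f(r_{2})=r_{0}$ at $q=-\sqrt{3}$ by reducing the identity to a polynomial relation in $c=\cos(\pi/9)$ and then killing $c$ with the triple-angle identity $4c^{3}-3c=\cos(\pi/3)=1/2$; stability is a second symbolic check, and one expects in fact $(f^{3})'(r_{0})=+1$, marking $q=-\sqrt{3}$ as the saddle-node value at which the stable--unstable pair of $3$-cycles is born (this matches the statement's weaker ``stable'' rather than ``asymptotically stable''). Given (c), part (d) proceeds in two steps: first, continue the stable $3$-cycle across the entire range $-2<q\leq-\sqrt{3}$ by an implicit-function/saddle-node argument; second, apply Sharkovskii's theorem to $f$ restricted to $J$, which upgrades a single period-$3$ orbit to cycles of every period. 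Part (e) is then the Li--Yorke theorem: any continuous interval map possessing a point of period $3$ is chaotic in the sense of Li and Yorke.

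\textbf{Main obstacle.} The sharpest difficulty is the continuation step in (d): producing a period-$3$ orbit throughout the half-open interval $(-2,-\sqrt{3}]$ rather than only at the single value $q=-\sqrt{3}$. An implicit-function argument handles a neighborhood of $q=-\sqrt{3}$ directly, but pushing it all the way down to $q=-2$ requires ruling out intervening bifurcations, for which the natural tool is a global monotonicity analysis of the fixed-point equation $f^{3}(r)=r$. The closed-form verification in (c) is technically routine but is the other genuinely computational task; the remaining assertions reduce to standard tools (minimum analysis, resultants for cycle equations, and the Sharkovskii/Li--Yorke theorems).
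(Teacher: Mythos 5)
First, note that the paper itself does not prove this lemma: it is quoted from \cite{DKMOS}, so there is no in-paper argument to compare against line by line; your proposal has to stand on its own. On that basis, your parts (a) and (b) are correct and complete: the minimum value $f(1)=2+q>0$ gives well-definedness and positivity, the decrease $f(r)-r=q+1/r<q/2$ for $r>-2/q$ (together with convexity of $f$ and the lower bound $2+q$) gives a compact absorbing interval, and the cycle algebra $t_1t_2=1/2$, $t_1+t_2=-q$ with $(f\circ f)'(t_1)=9-4q^2$ is exactly right, yielding existence for $q<-\sqrt{2}$ and asymptotic stability for $\sqrt{2}<|q|<\sqrt{5/2}$. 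Part (c) as you present it is only a plan, but the triple-angle verification with $4\cos^3(\pi/9)-3\cos(\pi/9)=1/2$ is routine and your expectation $(f^3)'(r_0)=1$ at $q=-\sqrt{3}$ checks out numerically, consistent with the lemma's cautious word ``stable.''

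The genuine gap is the one you yourself flag, and it is worse than you suggest: parts (d) and (e) need a period-three point (or a Li--Yorke inequality) at \emph{every} $q\in(-2,-\sqrt{3}\,]$, and your continuation strategy does not deliver it. The implicit function theorem cannot even be started at $q=-\sqrt{3}$, precisely because $(f^3)'(r_0)=1$ there (the derivative of $f^3(r)-r$ vanishes at the cycle); a saddle-node unfolding only produces the pair of $3$-cycles locally on one side of $-\sqrt{3}$, and nothing in your argument rules out their destruction before $q$ reaches $-2$, so ``continue across the entire range'' is an unproved global claim, not a step. The standard repair --- and almost certainly what \cite{DKMOS} does --- is to bypass continuation entirely and verify, for each fixed $q$ in $(-2,-\sqrt{3}\,]$, the Li--Yorke hypothesis $f^3(a)\le a<f(a)<f^2(a)$ at an explicitly chosen point $a$ (e.g.\ tied to the critical point orbit), as a direct inequality in $q$. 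That single computation simultaneously gives cycles of all periods (so Sharkovskii is not even needed) and Li--Yorke chaos, i.e.\ both (d) and (e), with boundedness and positivity supplied by your part (a). As written, your proposal proves (a)--(b), sketches (c), and leaves (d)--(e) resting on an argument that fails at its anchor point.
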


Now we have the following.

\begin{proposition}
\label{P}Assume that conditions (\ref{nt}) and (\ref{d0}) hold with $a=1$,
$b^{\prime\prime}=bD_{cb}^{\prime}$, $c+bb^{\prime}/b^{\prime\prime}=q<0.$

(a) If $q>-2$ then all solutions of (\ref{1}) with $x_{0}+by_{0}+c>0$ are
well-defined and bounded.

(b) If $-\sqrt{5/2}<q<-\sqrt{2}$ then (\ref{1}) has an asymptotically stable
2-cycle $\{(x_{1},y_{1}),(x_{2},y_{2})\}$ where $y_{i}$ is given by
(\ref{pyn}) and%
\[
x_{1}=\frac{-q-\sqrt{q^{2}-2}}{2},\quad x_{2}=\frac{-q+\sqrt{q^{2}-2}}{2}%
\]

(c) If $bb^{\prime}=-b^{\prime\prime}(c+\sqrt{3})$ and $x_{0}+by_{0}%
+c=2\left(  1+\cos\pi/9\right)  /\sqrt{3}$ then the points $(x_{i},y_{i})$,
$i=1,2,3$ constitute a stable orbit of period 3 for (\ref{1}) where $y_{i}$ is
given by (\ref{pyn}) and
\[
x_{1}=\frac{2}{\sqrt{3}}\left(  1+\cos\frac{\pi}{9}\right)  ,\quad x_{2}%
=x_{1}-\sqrt{3}+\frac{1}{x_{1}},\quad x_{3}=x_{2}-\sqrt{3}+\frac{1}{x_{2}}%
\]

(d) If $-2<q\leq-\sqrt{3}$ then orbits of (\ref{1}) with $x_{0}+by_{0}+c>0$
include cycles of all possible periods.

(e) For $-2<c+bb^{\prime}/b^{\prime\prime}<-\sqrt{3}$ orbits of (\ref{1}) with
$x_{0}+by_{0}+c>0$ are bounded and exhibit chaotic behavior.
\end{proposition}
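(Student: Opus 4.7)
The plan is to match the hypotheses of Proposition~\ref{P} against those of Lemma~1 through the folding derivation, then read each conclusion off in turn. Assuming (\ref{d0}), the text preceding (\ref{o1}) already reduces the second-order equation (\ref{xo2}) to $r_{n+1}=ar_n+q+s/r_n$ under $r_n=x_{n+1}$. Setting $a=1$ and $b^{\prime\prime}=bD_{cb}^{\prime}$ gives $s=bD_{cb}^{\prime}/b^{\prime\prime}=1$, while $q=c+bb^{\prime}/b^{\prime\prime}<0$ is imposed directly. The initial value $r_0=x_1=x_0+by_0+c$, obtained from (\ref{iv}) with $a=1$, converts the hypothesis $x_0+by_0+c>0$ into $r_0>0$, so Lemma~1 applies to $(r_n)$ in full.

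Each conclusion then transports through the shift $x_n=r_{n-1}$ and the passive affine relation (\ref{pyn}). For (a), the positivity, well-definedness and boundedness supplied by Lemma~1(a) pass to $(x_n)$, and $(y_n)$ inherits boundedness from (\ref{pyn}); to verify well-definedness of (\ref{1b}), I would observe that under (\ref{d0}) the denominator $a^{\prime\prime}x_n+b^{\prime\prime}y_n+c^{\prime\prime}$ collapses via (\ref{pyn}) to $(b^{\prime\prime}/b)r_n$, which is nonzero whenever $r_n>0$. For (b) and (c), the explicit 2-cycle $\{t_1,t_2\}$ and the period-3 orbit of Lemma~1 produce the listed $x$-values verbatim, and the companion $y$-coordinates are forced by (\ref{pyn}); in (c) the stipulation $bb^{\prime}=-b^{\prime\prime}(c+\sqrt 3)$ is precisely the rearrangement of $q=-\sqrt 3$, matching Lemma~1(c), and the specified $r_0$ is exactly $x_0+by_0+c$. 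Parts (d) and (e) transfer in the same manner, since periodic and bounded orbits of $(r_n)$ are periodic and bounded orbits of $(x_n)$.

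The main obstacle I anticipate is lifting the qualitative properties from the scalar recursion to the planar system at the correct strength. Asymptotic stability in (b) is routine because (\ref{pyn}) is Lipschitz in $(x_n,x_{n+1})$, so the $y$-iterates converge to their 2-cycle values at the same rate as the $x$-iterates; stability of the period-3 orbit in (c) follows identically. The subtler step is (e): Li--Yorke chaos of $(r_n)$ gives scrambled pairs in the $x$-component, but to declare the planar orbit chaotic one must verify that scrambled behavior survives under $(x_n,x_{n+1})\mapsto (x_n,y_n)$. Since the planar dynamics is a continuous factor of the scalar dynamics, I would pass proximity and separation of $r$-orbits to $(x,y)$-orbits after restricting to a common subsequence on which both $r_{n-1}$ and $r_n$ are simultaneously close; once this is in place, all remaining verifications are direct algebraic substitutions through (\ref{pyn}).
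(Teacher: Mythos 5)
Your proposal is correct and follows essentially the same route as the paper: fold (\ref{1}) to (\ref{o1}), check that $a=1$, $b^{\prime\prime}=bD_{cb}^{\prime}$ give $s=1$ and that $r_{0}=x_{1}=x_{0}+by_{0}+c>0$, then transfer each conclusion of the Lemma back to the planar orbit through the passive relation (\ref{pyn}). In fact you supply more detail than the paper's very brief proof (which only argues the transfer of minimal periods), notably the verification that the denominator in (\ref{1b}) collapses to $(b^{\prime\prime}/b)r_{n}\neq 0$ and the continuity arguments for carrying stability and Li--Yorke scrambling over to $(x_{n},y_{n})$.
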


\begin{proof}
Let $p$ be the minimal (or prime period) of a solution $\{r_{n}\}$ of
(\ref{o1}) with $r_{0}>0.$ Then the sequence $\{x_{n}\}$ also has minimal
period $p$ and by (\ref{pyn}) $\{y_{n}\}$ has period $p$. It follows that the
orbit $\{(x_{n},y_{n})\}$ has minimal period $p$. Now with $r_{0}=x_{0}%
+by_{0}+c>0$ statements (a)-(e) are true by the above Lemma.
\end{proof}

We point out that the hypotheses of Proposition \ref{P} are sufficient (but
clearly not necessary) for proving that the system is capable of generating
periodic and complex trajectories. We obtain additional information about the
orbits of (\ref{1}) from results concerning (\ref{xo2}) in a future paper.

Proposition \ref{P} applies to several of the 136 system types listed in
(\ref{sty}) that satisfy conditions (\ref{d0}), thus settling the existence of
periodic orbits or occurrence of complex behavior for those special cases. For
example, the following system
\begin{align*}
x_{n+1}  &  =x_{n}+2y_{n}-2\\
y_{n+1}  &  =\frac{0.75x_{n}+1.5y_{n}}{3x_{n}+6y_{n}-6}%
\end{align*}
which is type (40,49) satisfies Part (b) of Proposition \ref{P} ($q=-1.5$) 
and therefore, has an asymptotically stable 2-cycle
$\{(1,0.75),(0.5,1.25)\}$ (a limit cycle). Changing some of the parameters in the
above system yields the following which satisfies Parts (d) and (f) of
Proposition \ref{P} with $q\approx-1.83$
\begin{align*}
x_{n+1}  &  =x_{n}+2y_{n}-2\\
y_{n+1}  &  =\frac{0.25x_{n}+0.5y_{n}+1}{3x_{n}+6y_{n}-6}%
\end{align*}

This system has periodic orbits of all periods (depending on initial points)
and exhibits Li-Yorke type chaos. It is also worth mentioning that the rather
different type (40,37) system below also satisfies Parts (d) and (f) of
Proposition \ref{P}%
\begin{align*}
x_{n+1}  &  =x_{n}+1.5y_{n}-1.8\\
y_{n+1}  &  =\frac{1}{1.5x_{n}+2.25y_{n}-2.7}%
\end{align*}


\begin{thebibliography}{9}                                                                                                %


\bibitem {CKLM}Camouzis, E., Kulenovic, M.R.S., Ladas, G., Merino, O.,
\textquotedblleft Rational systems in the plane," \textit{J. Difference Eq.
Appl.}, \textbf{15}, 303-323, 2009

\bibitem {DKMOS}Dehghan, M., Kent, C.M., Mazrooei-Sebdani, R., Ortiz, N.L.,
Sedaghat, H., \textquotedblleft Dynamics of rational difference equations
containing quadratic terms," \textit{J. Difference Eq. Appl.}, \textbf{14},
191-208, 2008

\bibitem {LY}Li, T-Y., Yorke, J.A., \textquotedblleft Period three implies
chaos," \textit{Amer. Math. Monthly}, \textbf{82}, 985-992, 1975

\bibitem {SF}Sedaghat, H., \textquotedblleft Folding difference and
differential systems into higher order equations," http://arxiv.org/abs/1403.3995
\end{thebibliography}
\end{document}